\def\ra{\rightarrow}
\newtheorem{theorem}{THEOREM}[section]
\newtheorem{corollary}[theorem]{Corollary}
\newtheorem{proposition}[theorem]{Proposition}
\newtheorem{example}[theorem]{Example}
\theoremstyle{definition}
\theoremstyle{remark}
\newtheorem{remark}[theorem]{Remark}
\newcommand\CC{{\mathbb C}}
\newcommand\RR{{\mathbb R}}
\def\GL{\mathop{\rm GL}\nolimits}
\def\Re{\mathop{\rm Re}\nolimits}
\def\blfootnote{\xdef\@thefnmark{}\@footnotetext}
\begin{document}

\title[On the CR-curvature of tube hypersurfaces]{On the CR-curvature of Levi degenerate
\vspace{0.1cm}\\
tube hypersurfaces}\blfootnote{{\bf Mathematics Subject Classification:} 32V05, 32V20, 35J96, 34A05, 34A26.} \blfootnote{{\bf Keywords:} CR-curvature, the Monge-Amp\`ere equation, the Monge equation.}
\author[Isaev]{Alexander Isaev}

\address{Mathematical Sciences Institute\\
Australian National University\\
Acton, ACT 2601, Australia}
\email{alexander.isaev@anu.edu.au}

\maketitle

\thispagestyle{empty}

\pagestyle{myheadings}

\begin{abstract} 
In article \cite{I2} we studied tube hypersurfaces in $\CC^3$ that are 2-nondegenerate and uniformly Levi degenerate of rank 1. In particular, we discovered that for the CR-curvature of such a hypersurface to vanish it suffices to require that only two coefficients (called $\Theta^2_{21}$ and $\Theta^2_{10}$) in the expansion of a certain component of the CR-curvature form be identically zero. In this paper, we show that, surprisingly, the vanishing of the entire CR-curvature is in fact implied by the vanishing of a single quantity derived from $\Theta^2_{10}$. This result strengthens the main theorem of \cite{I2} and also leads to a remarkable system of partial differential equations. Furthermore, we explicitly characterize the class of not necessarily CR-flat tube hypersurfaces given by the vanishing of $\Theta^2_{21}$.
\end{abstract}

\section{Introduction}\label{intro}
\setcounter{equation}{0}

This paper extends our earlier article \cite{I2}, and the reader will be extensively referred to \cite{I2} in what follows. In particular, a detailed review of all the  necessary CR-geometric concepts is contained in \cite[Section 2]{I2}, and we will utilize those concepts without further reference.

We consider connected $C^{\infty}$-smooth real hypersurfaces in the complex vector space $\CC^n$ with $n\ge 2$. Specifically, we study {\it tube hypersurfaces}, or simply {\it tubes}, i.e, locally closed real submanifolds of the form
$$
M={\mathcal S}+iV,
$$
where ${\mathcal S}$ is a hypersurface in $\RR^n\subset\CC^n$ called the base of $M$. Two tube hypersurfaces are called affinely equivalent if there exists an affine transformation of $\CC^n$ given by
\begin{equation}
z\mapsto Az+b,\quad A\in\GL_n(\RR),\quad b\in\CC^n\label{affequiv}
\end{equation}
that maps one hypersurface onto the other (this occurs if and only if the bases of the tubes are affinely equivalent as submanifolds of $\RR^n$).

There has been a substantial effort to relate the CR-geometric and affine-geometric aspects of the study of tubes (see \cite[Section 1]{I2} for an extensive bibliography). Specifically, the following question has attracted much attention:
\vspace{-0.5cm}\\

$$
\begin{array}{l}
\hspace{0.2cm}\hbox{$(*)$ when does local or global CR-equivalence of tubes imply}\\
\vspace{-0.3cm}\hspace{0.8cm}\hbox{affine equivalence?}\\
\end{array}
$$
\vspace{-0.4cm}\\

\noindent Until recently, a reasonable answer to the above question has only existed for Levi nondegenerate tube hypersurfaces that are also CR-flat, i.e., have identically vanishing CR-curvature (see monograph \cite{I1} for an up-to-date exposition of the existing theory). In an attempt to relax the Levi nondegeneracy requirement, in \cite{I2} we set out to investigate question $(*)$ for a class of Levi degenerate 2-nondegenerate hypersurfaces while still assuming CR-flatness. As part of our considerations, we analyzed CR-curvature for this class, and in the present paper we improve on that analysis. 

Notice that CR-curvature is only defined in situations when the CR-structures in question are reducible to absolute parallelisms with values in some Lie algebra ${\mathfrak g}$. Indeed, let ${\mathfrak C}$ be a class of CR-manifolds. Then the CR-structures in ${\mathfrak C}$ are said to reduce to ${\mathfrak g}$-valued absolute parallelisms if to every $M\in{\mathfrak C}$ one can assign a fiber bundle ${\mathcal P}_M\ra M$ and an absolute parallelism $\omega_M$ on ${\mathcal P}_M$ such that for every $p\in M$ the parallelism establishes an isomorphism between $T_p(M)$ and ${\mathfrak g}$ and for any $M_1,M_2\in{\mathfrak C}$ the following holds: 

\noindent (i) every CR-isomorphism $f:M_1\ra M_2$ can be lifted to a diffeomorphism\linebreak $F: {\mathcal P}_{M_{{}_1}}\ra{\mathcal P}_{M_{{}_2}}$ satisfying
\begin{equation}
F^{*}\omega_{M_{{}_2}}=\omega_{M_{{}_1}},\label{eq8}
\end{equation}
and 

\noindent (ii) any diffeomorphism $F: {\mathcal P}_{M_{{}_1}}\ra{\mathcal P}_{M_{{}_2}}$ satisfying (\ref{eq8}) 
is a bundle isomorphism that is a lift of a CR-isomorphism $f:M_1\ra M_2$. 

In this situation one introduces the ${\mathfrak g}$-valued {\it CR-curvature form}\,
$$
\Omega_M:=d\omega_M-\frac{1}{2}\left[\omega_M,\omega_M\right],\label{genformulacurvature}
$$
and the condition of the CR-flatness of $M$ means that $\Omega_M$ identically vanishes on the bundle ${\mathcal P}_M$.
 
Reducing CR-structures (as well as other geometric structures) to absolute parallelisms goes back to \'E. Cartan  who showed that reduction takes place for all 3-dimensional Levi nondegenerate CR-hyper\-surfaces (see \cite{C}). Since then there have been many developments incorporating the assumption of Levi nondegeneracy (see \cite[Section 1]{I2} for references). On the other hand, reducing the CR-structures of Levi degenerate CR-mani\-folds has proved to be rather difficult, and the first result for a large class of Levi degenerate manifolds only appeared in 2013 in our paper \cite{IZ}. Specifically, we considered the class ${\mathfrak C}_{2,1}$ of connected 5-dimensional CR-hypersurfaces that are 2-nondegenerate and uniformly Levi degenerate of rank 1 and showed that the CR-structures in this class reduce to ${\mathfrak{so}}(3,2)$-valued parallelisms (see \cite{MS}, \cite{Poc} for alternative constructions and \cite{Por} for a reduction in the 7-dimensional case). In particular, in \cite{IZ} we prove that a manifold $M\in{\mathfrak C}_{2,1}$ is CR-flat (with respect to our reduction) if and only if near its every point $M$ is CR-equivalent to an open subset of the tube hypersurface over the future light cone in $\RR^3$:
$$
M_0:=\left\{(z_1,z_2,z_3)\in\CC^3\mid (\Re z_1)^2+(\Re z_2)^2-(\Re z_3)^2=0,\,\ \Re z_3>0\right\}.\label{light}
$$

Now, the main result of \cite{I2} (see Theorem 1.1 therein) asserts that every CR-flat tube hypersurface in ${\mathfrak C}_{2,1}$ is affinely equivalent to an open subset of $M_0$. This conclusion is a complete answer to question $(*)$ in the situation at hand and is in stark contrast to the Levi nondegenerate case where the CR-geometric and affine-geometric classifications differ even in low dimensions. 

In fact, in \cite{I2} we obtain a stronger result. Namely, for the assertion of \cite[Theorem 1.1]{I2} to hold, it suffices to require that only two coefficients (called $\Theta^2_{21}$ and $\Theta^2_{10}$) in the expansion of a single component of the CR-curvature form $\Omega_M$  (called $\Theta^2$) be identically zero on ${\mathcal P}_M$ (see \cite[Theorem 3.1]{I2}). Our argument is local, and for every $x\in M$ we only utilize the vanishing of $\Theta^2_{21}$ and $\Theta^2_{10}$ on a particular section $\gamma$ of ${\mathcal P}_M$ over a neighborhood of $x$:
\begin{equation}
\left\{\begin{array}{l}
\Theta^2_{21}|_{\gamma}=0,\\
\vspace{-0.1cm}\\
\Theta^2_{10}|_{\gamma}=0.
\end{array}\right.\label{ceqs1}
\end{equation} 
Each of the two conditions in system (\ref{ceqs1}) can be expressed as a partial differential equation on the local defining function of the hypersurface $M$ near $x$. These equations are quite complicated; for example, the first identity in (\ref{ceqs1}) is equivalent to (\ref{veryfinalthetav}). The expression for $\Theta^2_{10}|_{\gamma}$ is especially hard to find, and in our computation of $\Theta^2_{10}|_{\gamma}$ in \cite{I2} some of its terms were only calculated under the simplifying assumption $\Theta^2_{21}|_{\gamma}=0$. This was sufficient for our purposes as we were only interested in solving system (\ref{ceqs1}). Indeed, denoting by ${\mathbf \Theta}^2_{10}$ the quantity arising from the constrained calculation of $\Theta^2_{10}|_{\gamma}$, we see that the system of equations  
\begin{equation}
\left\{\begin{array}{l}
\Theta^2_{21}|_{\gamma}=0,\\
\vspace{-0.1cm}\\
{\mathbf \Theta}^2_{10}=0
\end{array}\right.\label{ceqs}
\end{equation}
is equivalent to (\ref{ceqs1}). Interestingly, if in suitable coordinates the base of $M$ is given locally as the graph of a function of two variables, the second equation in (\ref{ceqs}) becomes the well-known Monge equation on this function with respect to one of the variables (see (\ref{veryfinalthetasss})). 

To write system (\ref{ceqs}) more explicitly, recall that $M$ is uniformly Levi degenerate of rank 1 and 2-nondegenerate. Due to Levi degeneracy, the graphing function of $M$ satisfies the homogeneous Monge-Amp\`ere equation (see (\ref{mongeampere})). Thus, the detailed form of (\ref{ceqs}) is
\begin{equation}
\left\{\begin{array}{l}
\Theta^2_{21}|_{\gamma}=0,\\
\vspace{-0.1cm}\\
\hbox{The Monge equation w.r.t. one variable:}\,\,{\mathbf \Theta}^2_{10}=0,\\
\vspace{-0.1cm}\\
\hbox{The Monge-Amp\`ere equation},
\end{array}\right.\label{threeeqs}
\end{equation}
where we additionally assume that certain quantities responsible for the Levi form to have rank precisely 1 and for 2-nondegen\-eracy are everywhere nonzero (see (\ref{rho11}) and (\ref{snonzero}), respectively). System (\ref{threeeqs}) is the centerpiece of the proof of \cite[Theorem 3.1]{I2}. In \cite{I2} we explicitly solved (\ref{threeeqs}) and observed that every solution of this system defines a tube hypersurface affinely equivalent to an open subset of $M_0$. As $M_0$ is CR-flat, this shows, in particular, that conditions (\ref{ceqs}) imply the vanishing of the CR-curvature form $\Omega_M$ on an open subset of the bundle ${\mathcal P}_M$ over a neighborhood of $x$. Hence if both $\Theta^2_{21}$ and $\Theta^2_{10}$ are identically zero on ${\mathcal P}_M$, so is the entire form $\Omega_M$.

The main theorem of the present paper establishes a surprising dependence between the two local conditions in (\ref{ceqs}). We will now state the theorem in general terms, with the detailed formulation postponed until the next section (see\linebreak Theorem \ref{maindetailed}).

\begin{theorem}\label{main}
Let $M$ be a tube hypersurface in $\CC^3$ and assume that $M\in{\mathfrak C}_{2,1}$. Fix $x\in M$ and a suitable section $\gamma$ of ${\mathcal P}_M$ over a neighborhood of $x$. Then the condition ${\mathbf \Theta}^2_{10}=0$ implies $\Theta^2_{21}|_{\gamma}=0$. 
\end{theorem}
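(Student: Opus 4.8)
The plan is to reduce the statement to an identity between explicit partial differential expressions in a single graphing function and then to exhibit $\Theta^2_{21}|_\gamma$ as a differential consequence of ${\mathbf \Theta}^2_{10}$. First I would record, following \cite{I2}, that with respect to the fixed section $\gamma$ both quantities are concrete PDEs on the local graphing function of the base: the condition $\Theta^2_{21}|_\gamma=0$ is (\ref{veryfinalthetav}), while ${\mathbf \Theta}^2_{10}=0$ is the Monge equation (\ref{veryfinalthetasss}). Writing the base near $x$ as a graph $x_3=F(x_1,x_2)$, uniform Levi degeneracy of rank $1$ forces $F$ to satisfy the homogeneous Monge--Amp\`ere equation (\ref{mongeampere}), and this constraint is available throughout. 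Because the Levi form has rank \emph{exactly} one and $M$ is $2$-nondegenerate, so that (\ref{rho11}) and (\ref{snonzero}) hold, the gradient map $(x_1,x_2)\mapsto(F_{x_1},F_{x_2})$ has rank one; hence locally $F_{x_2}=g(F_{x_1})$ for a function $g$ with $g''\neq 0$. This generating relation encodes (\ref{mongeampere}) and lets me express every partial derivative of $F$ through $g$ and the pure $x_1$-derivatives of $F$.

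Next I would substitute this normal form into the two PDEs. On the one hand ${\mathbf \Theta}^2_{10}=0$ becomes the Monge equation in the variable $x_1$, a fifth-order ODE satisfied by the $x_1$-profile of $F$ for each fixed $x_2$. On the other hand $\Theta^2_{21}|_\gamma$, which is genuinely two-dimensional, gets rewritten by using $F_{x_2}=g(F_{x_1})$ together with its $x_1$- and $x_2$-derivatives to eliminate \emph{all} derivatives of $F$ involving $x_2$ in favour of pure $x_1$-derivatives; every elimination introduces only powers of $F_{x_1x_1}$, which is nonvanishing by (\ref{rho11}). After this elimination $\Theta^2_{21}|_\gamma$ collapses to a differential polynomial in the pure $x_1$-jet of $F$ and in $g$ alone.

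The core step is then to recognise this reduced expression as lying in the differential ideal generated by the Monge equation modulo Monge--Amp\`ere. Concretely, I would differentiate ${\mathbf \Theta}^2_{10}$ in $x_1$ and, through $F_{x_2}=g(F_{x_1})$, in $x_2$, and search for the explicit linear combination, with coefficients rational in the $x_1$-jet of $F$, that reproduces the reduced $\Theta^2_{21}|_\gamma$; vanishing of the Monge equation then forces $\Theta^2_{21}|_\gamma=0$. An equivalent route, which I would use as a check, is to integrate the system consisting of the Monge equation and (\ref{mongeampere}) directly: the Monge equation makes every $x_1$-slice a conic, and (\ref{mongeampere}), equivalently $F_{x_2}=g(F_{x_1})$, fixes the $x_2$-dependence, producing exactly the family of tubes affinely equivalent to an open subset of $M_0$ already isolated in \cite{I2}; since $M_0$ is CR-flat, $\Theta^2_{21}$ vanishes on this family.

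The main obstacle is the elimination-and-recognition step. Removing the $x_2$-derivatives from the complicated expression (\ref{veryfinalthetav}) is systematic but lengthy, and the genuine difficulty is to see that the resulting high-order $x_1$-expression is a multiple, or a low-order differential consequence, of the \emph{single} Monge equation rather than an independent condition. This is exactly the surprise of the theorem: although $\Theta^2_{21}$ involves the $x_2$-direction, the Monge--Amp\`ere equation couples that direction so tightly to the $x_1$-direction that the one-variable Monge equation already annihilates it. I would keep to this explicit PDE computation rather than attempt a purely structural argument via the Bianchi identity for $\Omega_M$, since the latter is obstructed by the fact that ${\mathbf \Theta}^2_{10}$ is the \emph{constrained} reduction of $\Theta^2_{10}|_\gamma$ and not the curvature component itself; care is needed only to keep all denominators among the quantities (\ref{rho11}) and (\ref{snonzero}) that are assumed nonzero.
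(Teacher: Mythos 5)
Your setup is sound and close in spirit to the paper's: both conditions are the explicit PDEs (\ref{veryfinalthetav}) and (\ref{veryfinalthetasss}) on the graphing function, and your relation $F_{x_2}=g(F_{x_1})$ is the first half of the classical parametrization of solutions of (\ref{mongeampere}) that the paper actually uses (your $g$ is the function $p$ of (\ref{condsfg})). But the proposal stops exactly at the decisive step, which you yourself flag as ``the main obstacle,'' and the two reasons you offer for believing that step succeeds do not hold up. The ``check'' is circular: the claim that the Monge equation together with (\ref{mongeampere}) produces precisely the tubes affinely equivalent to $M_0$ is Corollary \ref{flatness}, which the paper \emph{deduces from} Theorem \ref{main}; in \cite{I2} that family is isolated only as the solution set of the three-equation system (\ref{threeeqs}), which includes $\Theta^2_{21}|_{\gamma}=0$ among its hypotheses. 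Nor does (\ref{mongeampere}) ``fix the $x_2$-dependence'': its general solution carries a second arbitrary function (the $q$ of (\ref{condsfg})), and Section \ref{secother} exhibits non-flat tubes satisfying both (\ref{mongeampere}) and (\ref{veryfinalthetav}).

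Your primary route --- exhibiting the reduced $\Theta^2_{21}|_{\gamma}$ as a linear combination, with rational coefficients, of ${\mathbf \Theta}^2_{10}$ and its derivatives --- is where all the work lies, and as literally stated it is likely to fail. What the paper's computation shows is subtler: in the $(p,q)$ parametrization, condition (\ref{veryfinalthetav}) collapses to $p'''q'-p''q''=0$, while (\ref{veryfinalthetasss}) expands in powers of $w$ into the four ODEs (\ref{final1}); only after explicitly integrating the first and fourth of these (giving $p''=\pm P^{-3/2}$, $q'=Q^{-3/2}$ with $P,Q$ quadratic polynomials) does the difference of the remaining two equations reduce to $8(PQ'-P'Q)^3=0$, and $PQ'-P'Q$ is a nowhere-vanishing multiple of $p'''q'-p''q''$. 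Thus the target expression is obtained only as a \emph{cube} inside the relevant differential ideal --- i.e., it lies in the radical, not visibly in the ideal itself --- and even that is detected only after two integrations introducing $P$ and $Q$. A direct search for a certificate reproducing $\Theta^2_{21}|_{\gamma}$ on the nose, rather than a power of it, may therefore come up empty. To complete the argument along your lines you would still need to perform the $w$-expansion (equivalently, your $x_2$-differentiations through $g$), integrate the two one-variable Monge equations, and establish the cubic identity; none of this is in the proposal.
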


\noindent We stress that although the quantity ${\mathbf \Theta}^2_{10}$ was computed in part under the assumption $\Theta^2_{21}|_{\gamma}=0$, it is not at all clear {\it a priori}\, why the vanishing of ${\mathbf \Theta}^2_{10}$ should imply that of $\Theta^2_{21}|_{\gamma}$.

Together with results of \cite{I2}, Theorem \ref{main} yields:

\begin{corollary}\label{flatness}
Let $M$ be a tube hypersurface in $\CC^3$ and assume that $M\in{\mathfrak C}_{2,1}$. Fix $x\in M$ and a suitable section $\gamma$ of ${\mathcal P}_M$ over a neighborhood of $x$. If ${\mathbf \Theta}^2_{10}=0$, then $M$ near $x$ is affinely equivalent to an open subset of $M_0$; in particular, the CR-curvature form $\Omega_M$ vanishes on an open subset of ${\mathcal P}_M$ over a neighborhood\linebreak of $x$.
\end{corollary}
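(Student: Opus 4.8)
The plan is to deduce Corollary~\ref{flatness} by chaining Theorem~\ref{main} with the strengthened classification \cite[Theorem~3.1]{I2} and the CR-flatness of the model $M_0$ established in \cite{IZ}; essentially all of the analytic content is already packaged into those statements, so what remains is a short logical argument.

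First I would assume ${\mathbf \Theta}^2_{10}=0$ along $\gamma$ and apply Theorem~\ref{main} to obtain $\Theta^2_{21}|_{\gamma}=0$. Thus both equations of system (\ref{ceqs}) hold simultaneously. The one point that needs care is the passage from (\ref{ceqs}) to (\ref{ceqs1}): by construction ${\mathbf \Theta}^2_{10}$ agrees with $\Theta^2_{10}|_{\gamma}$ only modulo terms that were discarded under the simplifying assumption $\Theta^2_{21}|_{\gamma}=0$. But that assumption is now in force, having just been derived from Theorem~\ref{main}, so in fact ${\mathbf \Theta}^2_{10}=\Theta^2_{10}|_{\gamma}$ here, whence $\Theta^2_{10}|_{\gamma}=0$. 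Consequently both $\Theta^2_{21}|_{\gamma}$ and $\Theta^2_{10}|_{\gamma}$ vanish, which is precisely system (\ref{ceqs1}).

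Next I would invoke \cite[Theorem~3.1]{I2}, whose hypothesis is exactly the simultaneous vanishing of $\Theta^2_{21}$ and $\Theta^2_{10}$ along the section $\gamma$; that theorem yields that $M$ near $x$ is affinely equivalent to an open subset of $M_0$. Finally, to reach the curvature statement, I would recall that $M_0$ is CR-flat by the reduction of \cite{IZ}, i.e. $\Omega_{M_0}\equiv 0$ on ${\mathcal P}_{M_0}$. Since the affine equivalence (\ref{affequiv}) is in particular a CR-isomorphism $f\co M\ra M_0$, property (i) of the reduction lifts it to a diffeomorphism $F$ with $F^{*}\omega_{M_0}=\omega_M$ as in (\ref{eq8}); pulling the defining relation $\Omega=d\omega-\tfrac{1}{2}[\omega,\omega]$ back through $F$ then transports the vanishing of the curvature form and shows that $\Omega_M$ vanishes on the corresponding open subset of ${\mathcal P}_M$ over a neighborhood of $x$. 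I do not anticipate any genuine obstacle in the Corollary itself: the only substantive input is Theorem~\ref{main}, and once that implication is in hand the earlier analysis of \cite{I2} applies verbatim.
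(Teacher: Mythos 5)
Your argument is correct and follows essentially the same route as the paper: Theorem \ref{main} gives $\Theta^2_{21}|_{\gamma}=0$, the equivalence of (\ref{ceqs}) with (\ref{ceqs1}) then puts you in the hypotheses of \cite[Theorem 3.1]{I2} (via solving system (\ref{threeeqs})), and the CR-flatness of $M_0$ is transported back to $M$ through the lifted CR-isomorphism. The care you take over the passage from ${\mathbf \Theta}^2_{10}$ to $\Theta^2_{10}|_{\gamma}$ is exactly the point the paper itself flags when asserting that (\ref{ceqs}) is equivalent to (\ref{ceqs1}).
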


\noindent The above result is rather unexpected as it has been believed for some time now that CR-flatness for manifolds in the class ${\mathfrak C}_{2,1}$ should be controlled by two conditions rather than one (cf.~Remark \ref{pocfunctions}).

By Theorem \ref{main}, system (\ref{threeeqs}) reduces to a system of two equations:
\begin{equation}
\left\{\begin{array}{l}
\hbox{The Monge equation w.r.t. one variable:}\,\,{\mathbf \Theta}^2_{10}=0,\\
\vspace{-0.1cm}\\
\hbox{The Monge-Amp\`ere equation},
\end{array}\right.\label{twoeeqs}
\end{equation}
where we assume in addition that (\ref{rho11}) and (\ref{snonzero}) are satisfied. This system is truly remarkable. Indeed, by Corollary \ref{flatness} it has a clear geometric meaning as it locally describes all CR-flat tubes in the class ${\mathfrak C}_{2,1}$. Moreover, all solutions of this system can be explicitly found, and every solution yields a tube hypersurface affinely equivalent to an open subset of $M_0$. Next, each of the two equations in (\ref{twoeeqs}) has its own geometric interpretation: the classical single-variable Monge equation describes all planar conics (see, e.g., \cite[pp.~51--52]{Lan}, \cite{Las}), whereas the graphs of the solutions of the Monge-Amp\`ere equation are exactly the surfaces in $\RR^3$ with degenerate second fundamental form. Finally---and quite curiously---both equations in (\ref{twoeeqs}) happen to be named after Gaspard Monge. It is rather satisfying to see that the invariants constructed in \cite{IZ} lead to an object so abundantly filled with geometric features. This indicates that the theory of the class ${\mathfrak C}_{2,1}$ is rich and deserves further exploration.

The paper is organized as follows. In Section \ref{secmain} we state and prove Theorem \ref{maindetailed}, which is the detailed variant of Theorem \ref{main}. Further, in Section \ref{secother} we investigate the converse implication, namely the question whether the vanishing of $\Theta^2_{21}|_{\gamma}$ implies that of ${\mathbf \Theta}^2_{10}$. The answer to this question turns out to be negative, and in Propositions \ref{main1}, \ref{firstcondrho} we write the general form of a solution of the system
\begin{equation}
\left\{\begin{array}{l}
\Theta^2_{21}|_{\gamma}=0,\\
\vspace{-0.1cm}\\
\hbox{The Monge-Amp\`ere equation,}
\end{array}\right.\label{twoeeqs1}
\end{equation}
where, as before, we assume that (\ref{rho11}) and (\ref{snonzero}) are satisfied. Unlike (\ref{twoeeqs}), system (\ref{twoeeqs1}) describes a class of not necessarily CR-flat tubes in ${\mathfrak C}_{2,1}$, and Propositions \ref{main1}, \ref{firstcondrho} show that this interesting class can be effectively characterized as well.    

{\bf Acknowledgements.} This work is supported by the Australian Research Council. The author is grateful to Boris Kruglikov for useful discussions.

\section{The main result}\label{secmain}
\setcounter{equation}{0}

Let $M$ be any tube hypersurface in $\CC^3$. For $x\in M$, a tube neighborhood of $x$ in $M$ is an open subset $U$ of $M$ that contains $x$ and has the form $M\cap({\mathcal U}+i\RR^3)$, where ${\mathcal U}$ is an open subset of $\RR^3$. It is easy to see that for every point $x\in M$ there exists a tube neighborhood $U$ of $x$ in $M$ and an affine transformation of $\CC^3$ as in (\ref{affequiv}) that maps $x$ to the origin and establishes affine equivalence between $U$ and a tube hypersurface of the form
\begin{equation}
\begin{array}{l}
\Gamma_{\rho}:=\{(z_1,z_2,z_3): z_3+{\bar z}_3=\rho(z_1+{\bar z}_1,z_2+{\bar z}_2)\}=\\
\vspace{-0.1cm}\\
\hspace{4cm}\displaystyle\left\{(z_1,z_2,z_3): \Re z_3=\frac{1}{2}\,\rho(2\Re z_1,2\Re z_2)\right\},
\end{array}\label{basiceq}
\end{equation}
where $\rho(t_1,t_2)$ is a smooth function defined in a neighborhood of 0 in $\RR^2$ with
\begin{equation}
\rho(0)=0,\quad \rho_1(0)=0, \quad \rho_2(0)=0\label{initial}
\end{equation}
(here and below subscripts 1 and 2 indicate partial derivatives with respect to $t_1$ and $t_2$). In what follows, $\Gamma_{\rho}$ will be analyzed locally near the origin, thus we will only be interested in the germ of $\rho$ at 0 and the domain of $\rho$ will be allowed to shrink if necessary.

Let now $M$ be uniformly Levi degenerate of rank 1. Then the Hessian matrix of $\rho$ has rank 1 at every point, hence $\rho$ is a solution of the homogeneous Monge-Amp\`ere equation
\begin{equation}
\rho_{11}\rho_{22}-\rho_{12}^2=0,\label{mongeampere}
\end{equation}
where one can additionally assume
\begin{equation}
\hbox{$\rho_{11}>0$ everywhere.}\label{rho11}
\end{equation}
In \cite[Section 3]{I2} we showed that for $\rho$ satisfying (\ref{mongeampere}), (\ref{rho11}), the hypersurface $\Gamma_{\rho}$ is 2-nondegenerate if and only if the function
\begin{equation}
S:=\left(\frac{\rho_{12}}{\rho_{11}}\right)_{1}\label{functions} 
\end{equation}
vanishes nowhere. Thus, assuming that $M$ is 2-nondegenerate, we have
\begin{equation}
\hbox{$S\ne 0$ everywhere.}\label{snonzero}
\end{equation}

Next, consider the fiber bundle ${\mathcal P}_{\Gamma_{\hspace{-0.05cm}{}_\rho}}\to {\Gamma}_{\rho}$ arising from the reduction to absolute parallelisms achieved in \cite{IZ} for CR-hypersurfaces in the class ${\mathfrak C}_{2,1}$,  and let $\gamma_0$ be the section of ${\mathcal P}_{\Gamma_{\hspace{-0.05cm}{}_\rho}}$ given in suitable coordinates by \cite[formula (4.21)]{I2}. In \cite[formula (4.27)]{I2} we computed the restriction of the curvature coefficient $\Theta^{2}_{21}$ to $\gamma_0$. The condition $\Theta^{2}_{21}|_{\gamma_0}=0$ can be then written as the equation
\begin{equation}
\makebox[250pt]{$\begin{array}{l}
\displaystyle2{\sqrt{\rho_{11}}}\left[\rho_{12}\left(\frac{S_{1}}{\sqrt{\rho_{11}}S}\right)_{\hspace{-0.1cm}1}-\rho_{11}\left(\frac{S_{1}}{\sqrt{\rho_{11}}S}\right)_{\hspace{-0.1cm}2}\right]-\\
\vspace{-0.1cm}\\
\displaystyle\hspace{0.4cm}2{\sqrt{\rho_{11}}}\left[\rho_{12}\left(\frac{\rho_{111}}{\sqrt{\rho_{11}^3}}\right)_{\hspace{-0.1cm}1}-\rho_{11}\left(\frac{\rho_{111}}{\sqrt{\rho_{11}^3}}\right)_{\hspace{-0.1cm}2}\right]-{11S_{1}}\,\rho_{11}-{S\,\rho_{111}}=0
\end{array}$}\label{veryfinalthetav}
\end{equation}
(cf.~\cite[formula (4.28)]{I2}). Further, in \cite[formula (4.46)]{I2} we found the expression for the restriction of $\Theta^{2}_{10}$ to $\gamma_0$ in which some of the terms were computed under the simplifying assumption that equation (\ref{veryfinalthetav}) holds. If we denote the quantity resulting from this calculation by ${\mathbf \Theta}^{2}_{10}$, then one observes that the condition ${\mathbf \Theta}^{2}_{10}=0$ can be written as the equation
\begin{equation}
9\rho^{{\rm(V)}}\rho_{11}^{2}-45\rho^{{\rm(IV)}}\rho_{111}\rho_{11}+40\rho_{111}^{3}=0, 
\label{veryfinalthetasss}
\end{equation}
where $\rho^{{\rm(IV)}}:=\partial^{\,4}\rho/\partial\,t_1^4$, $\rho^{{\rm(V)}}:=\partial^{\,5}\rho/\partial\,t_1^5$ (cf.~\cite[formula (4.47)]{I2}). Notice that, remarkably, (\ref{veryfinalthetasss}) is the Monge equation with respect to the first variable.

We are now ready to state and prove the detailed variant of Theorem \ref{main}:

\begin{theorem}\label{maindetailed}
Let $\rho$ be a smooth function satisfying {\rm (\ref{initial})--(\ref{rho11}) and (\ref{snonzero})}, where $S$ is defined in {\rm (\ref{functions})}. Then condition {\rm (\ref{veryfinalthetasss})} implies condition {\rm (\ref{veryfinalthetav})}. 
\end{theorem}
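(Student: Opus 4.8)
The plan is to begin by recognising that the two bracketed expressions in \eqref{veryfinalthetav} both arise from differentiation along the characteristic (null) direction of the Monge--Amp\`ere equation \eqref{mongeampere}. Setting $p:=\rho_{12}/\rho_{11}$, so that $S=p_1$ by \eqref{functions}, one has $\rho_{12}(\cdot)_1-\rho_{11}(\cdot)_2=\rho_{11}\,X(\cdot)$ for the field $X:=p\,\partial_{t_1}-\partial_{t_2}$, which annihilates the gradient: $X\rho_1=X\rho_2=0$ (the second using \eqref{mongeampere}). The first task is to record the identities that \eqref{mongeampere} forces on $X$. Differentiating \eqref{mongeampere} and using $\rho_{22}=\rho_{12}^2/\rho_{11}$ gives $p_2=p\,p_1$, hence $Xp=0$ and the operator relation $[X,\partial_{t_1}]=-S\,\partial_{t_1}$. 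From these I would deduce in turn $X\rho_{11}=-S\rho_{11}$, $XS=-S^2$, and then $X\rho_{111}=-S_1\rho_{11}-2S\rho_{111}$, $XS_1=-3SS_1$. With this toolkit the action of $X$ on the two potentials $S_1/(\sqrt{\rho_{11}}\,S)$ and $\rho_{111}/\sqrt{\rho_{11}^3}$ appearing in \eqref{veryfinalthetav} is immediate.

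Carrying out that computation, I expect the left-hand side of \eqref{veryfinalthetav} to collapse, modulo \eqref{mongeampere}, all the way down to the single term $-12\,S_1\rho_{11}$: the $\rho_{111}$-contributions cancel against the $S_1$-contributions from the brackets, leaving only a multiple of $S_1$. Since $\rho_{11}>0$ by \eqref{rho11}, this would show that, under the standing hypotheses, \eqref{veryfinalthetav} is equivalent to the remarkably simple condition $S_1=0$. The theorem thereby reduces to the implication that \eqref{mongeampere} together with the Monge equation \eqref{veryfinalthetasss} forces $S_1=0$.

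To prove this core implication I would parametrise the general solution of \eqref{mongeampere} by its characteristics. As $X$ kills the gradient, $(\rho_1,\rho_2)$ is constant along the straight null lines, so taking $v:=\rho_1$ as the line label one obtains $t_1=T_0(v)-t_2\,\psi'(v)$ and $\rho_{11}=\bigl(T_0'(v)-t_2\,\psi''(v)\bigr)^{-1}$ for two free functions of one variable (a reparametrisation $T_0$ and the Gauss-image function $\psi$). Under the contact change of variable $d/dt_1=\rho_{11}\,d/dv$ the Monge operator in \eqref{veryfinalthetasss} transforms into the operator of the \emph{same} shape in $K:=T_0'(v)-t_2\,\psi''(v)$, namely $9K^2K'''-45\,KK'K''+40(K')^3=0$ (primes denoting $d/dv$); equivalently, via the linearising substitution $K\mapsto K^{-2/3}$, into the statement that $K^{-2/3}$ is a quadratic polynomial in $v$ for every fixed $t_2$. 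Because $K$ is affine in $t_2$, I would evaluate this at $t_2=0$, differentiate it once in $t_2$, and read off the top order in $t_2$, obtaining that $P:=F^{-2/3}$, $N:=F^{-5/3}G$ and $Q:=G^{-2/3}$ are all quadratic in $v$, where $F:=T_0'$ and $G:=\psi''$. These satisfy the polynomial identity $P^{5}=N^{2}Q^{3}$; comparing root multiplicities (each root of $P$ carries multiplicity $5=2+3$ on the right) forces $P$ and $Q$ to be proportional, hence $F$ and $G$ proportional, i.e.\ $FG'-F'G\equiv0$. Finally, a short computation identifies this Wronskian with $S_1$ (up to the nonvanishing factor $1/K^{3}$), giving $S_1=0$.

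I expect the main obstacle to be precisely this last implication, \eqref{mongeampere}$+$\eqref{veryfinalthetasss}$\Rightarrow S_1=0$. Its delicate ingredients are the passage to the characteristic normal form for $\rho_{11}$ and the recognition that the substitution $(\cdot)^{-2/3}$ linearises the (transformed) Monge equation into a statement about genuine quadratic polynomials, after which the proportionality of $F$ and $G$ must be extracted from the three quadraticity conditions. By comparison, the first two steps---the identification of $X$ with the Monge--Amp\`ere characteristic field and the collapse of \eqref{veryfinalthetav} to $S_1=0$---are mechanical once the commutation identities for $X$ are in place, and serve mainly to reveal that the elaborate expression \eqref{veryfinalthetav} secretly encodes the one-line condition $S_1=0$.
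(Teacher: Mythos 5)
Your proposal is correct and rests on the same backbone as the paper's proof: you parametrize the solutions of \eqref{mongeampere} by characteristics (your $T_0,\psi$ are exactly the paper's $q,p$ in \eqref{inverttted}--\eqref{secondpartials}), you reduce \eqref{veryfinalthetav} to the condition $S_1=0$, equivalently $q'/p''=\mathrm{const}$ as in \eqref{firstcur}, and you reduce \eqref{veryfinalthetasss} to quadraticity statements for $F^{-2/3}$, $G^{-2/3}$ and a mixed quantity, concluding $F\propto G$. Within that frame you make three tactical choices that differ from the paper and are worth noting. First, you obtain the collapse of \eqref{veryfinalthetav} to $-12\,S_1\rho_{11}$ intrinsically, via the commutation identities for the characteristic field $X$; this is correct (the constant $-12$ checks out) and is arguably more illuminating than the paper's direct substitution of \eqref{secondpartials}, \eqref{ids444}, which lands on $p'''q'-p''q''=0$ --- the same condition. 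Second, your assertion that the Monge operator in $t_1$ ``transforms into the operator of the same shape'' in $K=q'-wp''$ as a function of $v$ is the one step you state without justification; it is true, but it needs either the Legendre-duality argument (the $v$-parametrization is the dual curve, $(\rho^*)''=K$, and conics are preserved under duality) or the brute-force computation the paper performs via \eqref{rho4and5}. Either way one arrives at the paper's system \eqref{final1}, whose coefficients at $w^0,w^1,w^3$ are precisely your three quadraticity conditions on $P=F^{-2/3}$, $N=F^{-5/3}G$, $Q=G^{-2/3}$. Third, your endgame differs genuinely from the paper's: you exploit the algebraic identity $P^5=N^2Q^3$ among three polynomials of degree at most two and count root multiplicities ($5=2a+3b$ with $a,b\le 2$ forces $a=b=m$) to get $Q\propto P$, using only three of the four equations in \eqref{final1}; the paper instead substitutes into the two middle equations, obtains the symmetric pair \eqref{PQ}, and subtracts to get $8(PQ'-P'Q)^3=0$. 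Your version is cleaner and shows the fourth equation's $w^2$ coefficient is not even needed; the paper's is more elementary and self-contained. Both are valid, so the only thing to tighten in a write-up is the duality step.
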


\begin{remark}\label{clarific}
We emphasize that although the quantity ${\mathbf \Theta}^2_{10}$ was computed partly under the assumption $\Theta^2_{21}|_{\gamma_0}=0$, the fact that the vanishing of ${\mathbf \Theta}^2_{10}$ implies the vanishing of $\Theta^2_{21}|_{\gamma_0}$ is not at all obvious and is actually  quite surprising.
\end{remark}

\begin{proof} We start by recalling classical facts concerning solutions of the homogeneous Monge-Amp\`ere equation (\ref{mongeampere}). For details the reader is referred to paper \cite{U}, which treats this equation in somewhat greater generality. 

Let us make the following change of variables near the origin:
\begin{equation}
\begin{array}{l}
v=\rho_1(t_1,t_2),\\
\vspace{-0.3cm}\\
w=t_2
\end{array}\label{changevar}
\end{equation}
and set
\begin{equation}
\begin{array}{l}
p(v,w):=\rho_2(t_1(v,w),w),\\
\vspace{-0.3cm}\\
q(v):=t_1(v,0).
\end{array}\label{condsfg}
\end{equation}
Equation (\ref{mongeampere}) immediately implies that $p$ is independent of $w$, so we write $p$ as a function of the variable $v$ alone. Furthermore, we have
\begin{equation}
q'(v)=\frac{1}{\rho_{11}(t_1(v,0),0)}.\label{gprime}
\end{equation}
Clearly, (\ref{initial}), (\ref{rho11}), (\ref{changevar}), (\ref{condsfg}), (\ref{gprime}) yield
\begin{equation}
p(0)=0,\quad q(0)=0,\quad \hbox{$q'>0$ everywhere.}\label{initialconds}
\end{equation}

In terms of $p$ and $q$, the inverse of (\ref{changevar}) is written as
\begin{equation}
\begin{array}{l}
t_1=q(v)-w\,p'(v),\\
\vspace{-0.3cm}\\
t_2=w,
\end{array}\label{inverttted}
\end{equation}
and the solution $\rho$ in the variables $v,w$ is given by
\begin{equation}
\rho(t_1(v,w),w)=vq(v)-\int_{0}^vq(\tau)d\tau+w(p(v)-vp'(v)).\label{solsparam}
\end{equation}
In particular, we see that the general smooth solution of the homogeneous Monge-Amp\`ere equation (\ref{mongeampere}) satisfying conditions (\ref{initial}), (\ref{rho11}) is parametrized by a pair of arbitrary smooth functions satisfying (\ref{initialconds}). 

We will now rewrite equation (\ref{veryfinalthetav}) in the variables $v$, $w$ introduced in (\ref{changevar}). First of all, from (\ref{changevar}), (\ref{inverttted}) we compute
\begin{equation}
\begin{array}{l}
\displaystyle\rho_{11}(t_1(v,w),w)=\displaystyle\frac{1}{q'-w\,p''},\\
\vspace{-0.3cm}\\
\displaystyle\rho_{12}(t_1(v,w),w)=\displaystyle\frac{p'}{q'-w\,p''},\\
\vspace{-0.3cm}\\
\displaystyle \rho_{111}(t_1(v,w),w)=-\frac{q''-w\,p'''}{(q'-w\,p'')^3}.
\end{array}\label{secondpartials}
\end{equation}
Next, from formulas (\ref{functions}), (\ref{inverttted}) and the first two identities in (\ref{secondpartials}) we obtain
\begin{equation}
\begin{array}{l}
\displaystyle S(t_1(v,w),w)=\frac{p''}{q'-w\,p''},\\
\vspace{-0.1cm}\\
\displaystyle S_1(t_1(v,w),w)=\frac{p'''q'-p''q''}{(q'-w\,p'')^3}.\\
\end{array}\label{ids444}
\end{equation}

Now, plugging the expressions from (\ref{secondpartials}), (\ref{ids444}) into (\ref{veryfinalthetav}), we see that the latter simplifies to the equation
\begin{equation}
p'''q'-p''q''= 0,\label{vanish1}
\end{equation}
that is, to the condition $S_1=0$. Since $S$ vanishes nowhere, the first identity in (\ref{ids444}) implies that $p''$ does not vanish either (this condition characterizes 2-nondegeneracy). Then, dividing (\ref{vanish1}) by $(p'')^2$, one obtains 
\begin{equation}
q'/p''=\hbox{const}.\label{firstcur}
\end{equation}
Thus, we see that after passing to the variables $v$, $w$ the complicated equation (\ref{veryfinalthetav}) turns into the simple condition (\ref{firstcur}). 

Further, we will rewrite equation (\ref{veryfinalthetasss}) in the variables $v$, $w$. From (\ref{inverttted}) and (\ref{secondpartials}) one computes 
\begin{equation}
\hspace{0.8cm}\makebox[250pt]{$\begin{array}{l}
\displaystyle \rho^{{\rm(IV)}}(t_1(v,w),w)=-\frac{1}{(q'-w\,p'')^5}\Bigl[(q'''-w\,p^{{\rm(IV)}})(q'-w\,p'')-\\
\vspace{-0.6cm}\\
\displaystyle\hspace{9cm}3(q''-w\,p''')^2\Bigr],\\
\vspace{-0.1cm}\\
\displaystyle \rho^{{\rm(V)}}(t_1(v,w),w)=-\frac{1}{(q'-w\,p'')^7}\Bigl[\Bigl((q^{{\rm(IV)}}-w\,p^{{\rm(V)}})(q'-w\,p'')-\\
\vspace{-0.4cm}\\
\displaystyle\hspace{1cm}5(q''-w\,p''')(q'''-w\,p^{{\rm(IV)}})\Bigr)(q'-w\,p'')-\\
\vspace{-0.4cm}\\
\displaystyle\hspace{1cm}5\Bigl((q'''-w\,p^{{\rm(IV)}})(q'-w\,p'')-3(q''-w\,p''')^2\Bigr)(q''-w\,p''')\Bigr].
\end{array}$}\label{rho4and5}
\end{equation}
Plugging expressions from (\ref{secondpartials}), (\ref{rho4and5}) into (\ref{veryfinalthetasss}) and collecting coefficients at $w^k$ for $k=0,1,2,3$ in the resulting formula, we see that (\ref{veryfinalthetasss}) is equivalent to the following system of four ordinary differential equations:
\begin{equation}
\left\{
\begin{array}{l}
9p^{{\rm(V)}}(p'')^2-45p^{{\rm(IV)}}p'''p''+40(p''')^3=0,\\
\vspace{-0.1cm}\\
6p^{{\rm(V)}}p''q'+3(p'')^2q^{{\rm(IV)}}-15(p^{{\rm(IV)}}p'''q'+p^{{\rm(IV)}}p''q''+p'''p''q''')+\\
\vspace{-0.3cm}\\
\hspace{8cm}40(p''')^2q''=0,\\
\vspace{-0.3cm}\\
3p^{{\rm(V)}}(q')^2+6p''q^{{\rm(IV)}}q'-15(p^{{\rm(IV)}}q''q'+p'''q'''q'+p''q'''q'')+\\
\vspace{-0.3cm}\\
\hspace{8cm}40p'''(q'')^2=0,\\
\vspace{-0.3cm}\\
9q^{{\rm(IV)}}(q')^2-45q'''q''q'+40(q'')^3=0.\\
\end{array}
\right.\label{final1}
\end{equation}
Thus, in order to prove the theorem, we need to show that system (\ref{final1}) implies condition (\ref{firstcur}). 

Notice that the first equation in (\ref{final1}) is the Monge equation and that the last one yields the Monge equation for any primitive of the function $q$. Also observe that all the equations in (\ref{final1}) reduce to the first one if condition (\ref{firstcur}) is satisfied.  

Recall now that the Monge equation describes planar conics and can be solved explicitly. Indeed, assuming that $p''>0$ we calculate
$$
\begin{array}{l}
\displaystyle\frac{1}{(p'')^{11/3}}\Bigl(9p^{{\rm(V)}}(p'')^2-45p^{{\rm(IV)}}p'''p''+40(p''')^3\Bigr)=\\
\vspace{-0.1cm}\\
\displaystyle\left(\frac{9p^{{\rm(IV)}}}{(p'')^{5/3}}-\frac{15(p''')^2}{(p'')^{8/3}}\right)'=9\left(\frac{p'''}{(p'')^{5/3}}\right)''=-\frac{27}{2}\left((p'')^{-2/3}\right)'''.
\end{array}
$$
Similarly, for $p''<0$ we have
$$
\begin{array}{l}
\displaystyle\frac{1}{(-p'')^{11/3}}\Bigl(9p^{{\rm(V)}}(p'')^2-45p^{{\rm(IV)}}p'''p''+40(p''')^3\Bigr)=\frac{27}{2}\left((-p'')^{-2/3}\right)'''.
\end{array}
$$
Thus, the first equation in (\ref{final1}) yields
\begin{equation}
p''=\pm P^{-3/2},\label{pprpr}
\end{equation}
where $P$ is a polynomial with $\deg P\le 2$ and $P(0)>0$. Similarly, taking into account (\ref{initialconds}), from the last equation in (\ref{final1}) we see
\begin{equation}
q'= Q^{-3/2},\label{qpr}
\end{equation}
where $Q$ is a polynomial with $\deg Q\le 2$ and $Q(0)>0$.

Next, from (\ref{pprpr}), (\ref{qpr}) we calculate
\begin{equation}
\begin{array}{l}
\displaystyle p'''=\mp\frac{3}{2}P^{-5/2}P',\\
\vspace{-0.1cm}\\
\displaystyle p^{{\rm(IV)}}=\pm\frac{15}{4}P^{-7/2}(P')^2\mp\frac{3}{2}P^{-5/2}P'',\\
\vspace{-0.1cm}\\
\displaystyle p^{{\rm(V)}}=\mp\frac{105}{8}P^{-9/2}(P')^3\pm\frac{45}{4}P^{-7/2}P''P',\\
\vspace{-0.1cm}\\
\displaystyle q''=-\frac{3}{2}Q^{-5/2}Q',\\
\vspace{-0.1cm}\\
\displaystyle q'''=\frac{15}{4}Q^{-7/2}(Q')^2-\frac{3}{2}Q^{-5/2}Q'',\\
\vspace{-0.1cm}\\
\displaystyle q^{{\rm(IV)}}=-\frac{105}{8}Q^{-9/2}(Q')^3+\frac{45}{4}Q^{-7/2}Q''Q'.
\end{array}\label{derivs}
\end{equation}
Plugging (\ref{pprpr}), (\ref{qpr}), (\ref{derivs}) into the second and third equations in (\ref{final1}) and simplifying the resulting expressions, we obtain, respectively,
\begin{equation}
\begin{array}{l}
7P^3(Q')^3-6P^3Q''Q'Q-(P')^3Q^3+9(P')^2PQ'Q^2-6P''P'PQ^3-\\
\vspace{-0.3cm}\\
\hspace{3.3cm}15P'P^2(Q')^2Q+6P'P^2Q''Q^2+6P''P^2Q'Q^2=0,\\
\vspace{-0.1cm}\\
7(P')^3Q^3-6P''P'PQ^3-P^3(Q')^3+9P'P^2(Q')^2Q-6P^3Q''Q'Q-\\
\vspace{-0.3cm}\\
\hspace{3.3cm}15(P')^2PQ'Q^2+6P''P^2Q'Q^2+6P'P^2Q''Q^2=0.
\end{array}\label{PQ}
\end{equation}
Subtracting the second identity in (\ref{PQ}) from the first one, we arrive at
$$
8(PQ'-P'Q)^3=0.
$$
It then follows that $Q=\hbox{const}\, P$, and therefore condition (\ref{firstcur}) holds as required. The proof is complete. \end{proof}

\section{A class of nonflat tubes}\label{secother}
\setcounter{equation}{0}

In this section we investigate the question of whether for a hypersurface of the form (\ref{basiceq}) that is uniformly Levi degenerate of rank 1 and 2-nondegenerate the condition $\Theta^{2}_{21}|_{\gamma_0}=0$ yields CR-flatness. Equivalently, 
\vspace{-0.5cm}\\

$$
\begin{array}{l}
\hspace{0.2cm}\hbox{$(**)$ given a smooth function $\rho$ satisfying {\rm (\ref{initial})--(\ref{rho11}) and (\ref{snonzero})},}\\
\vspace{-0.3cm}\hspace{0.95cm}\hbox{does condition (\ref{veryfinalthetav}) imply condition (\ref{veryfinalthetasss})?}\\
\end{array}
$$
\vspace{-0.4cm}\\

\noindent If one looks at (\ref{veryfinalthetav}) and (\ref{veryfinalthetasss}) in their original complicated PDE form, this question may appear to be hard. Luckily, after passing to the variables $v$, $w$ as in (\ref{changevar}), both equations simplify: (\ref{veryfinalthetav}) becomes condition (\ref{firstcur}), whereas (\ref{veryfinalthetasss}) turns into the system of four ordinary differential equations (\ref{final1}). In fact, as we remarked in the proof of Theorem \ref{maindetailed} in the preceding section, (\ref{firstcur}) forces all the equations in (\ref{final1}) to be identical to the Monge equation on the function $p$, thus, assuming that (\ref{firstcur}) holds, (\ref{veryfinalthetasss}) actually reduces to a single ODE. Therefore, for any $p$ that is {\it not}\, a solution of the Monge equation and such that 
\begin{equation}
p(0)=0,\quad \hbox{$p''\ne 0$ everywhere,}\label{condss1}
\end{equation}
and for
\begin{equation} 
\hbox{$q:=C (p'-p'(0))$, with $Cp''>0$},\label{condss2}
\end{equation}
formula (\ref{solsparam}) provides a counterexample to question $(**)$ (see, e.g., Example \ref{ex} below). 

In fact, for $p$ having properties (\ref{condss1}) and $q$ chosen as in (\ref{condss2}), formula (\ref{solsparam}) significantly simplifies:

\begin{proposition}\label{main1}
Let $\rho$ be a smooth function satisfying {\rm (\ref{initial})--(\ref{rho11}), (\ref{snonzero})} and {\rm (\ref{veryfinalthetav})}. Then formula {\rm (\ref{solsparam})} becomes
\begin{equation}
\rho(t_1(v,w),w)=(w-C)(p(v)-vp'(v)).\label{solsparam1}
\end{equation}
\end{proposition}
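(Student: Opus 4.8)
The plan is to build directly on the reduction already carried out in the proof of Theorem \ref{maindetailed}. There we saw that, after the change of variables (\ref{changevar}), the complicated equation (\ref{veryfinalthetav}) is equivalent to the simple condition (\ref{firstcur}), namely $q'/p''=\hbox{const}$. Since the hypotheses of the present proposition include (\ref{veryfinalthetav}) together with (\ref{initial})--(\ref{rho11}) and (\ref{snonzero}), I may take (\ref{firstcur}) as my starting point, solve it for $q$ in terms of $p$, and substitute the result into the parametric formula (\ref{solsparam}).

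First I would denote the constant in (\ref{firstcur}) by $C$, so that $q'=Cp''$. Integrating this identity and using the normalization $q(0)=0$ from (\ref{initialconds}) to fix the constant of integration, I obtain
$$
q(v)=C\bigl(p'(v)-p'(0)\bigr),
$$
which is precisely the ansatz (\ref{condss2}); the sign condition $Cp''>0$ there is automatic, since $Cp''=q'>0$ by (\ref{initialconds}) and $p''$ is nonvanishing by 2-nondegeneracy.

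Next I would insert this expression for $q$ into (\ref{solsparam}). The only nonroutine step is to evaluate the $w$-independent part $vq(v)-\int_0^v q(\tau)\,d\tau$. Using $\int_0^v p'(\tau)\,d\tau=p(v)-p(0)=p(v)$ (because $p(0)=0$), a short computation shows that the two contributions proportional to $p'(0)$ cancel, leaving
$$
vq(v)-\int_0^v q(\tau)\,d\tau=-C\bigl(p(v)-vp'(v)\bigr).
$$
Adding the remaining term $w\bigl(p(v)-vp'(v)\bigr)$ from (\ref{solsparam}) then yields $(w-C)\bigl(p(v)-vp'(v)\bigr)$, which is exactly (\ref{solsparam1}).

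The argument involves no genuine obstacle: all the substantive work---reducing (\ref{veryfinalthetav}) to (\ref{firstcur})---was done in the previous section, and what remains is a one-line integration together with the cancellation of the $p'(0)$ terms. The only point requiring minor care is the correct bookkeeping of the integration constant fixed by $q(0)=0$; once this is in place, formula (\ref{solsparam1}) follows immediately.
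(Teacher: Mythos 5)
Your proposal is correct and follows essentially the same route as the paper: the paper likewise invokes the reduction of (\ref{veryfinalthetav}) to (\ref{firstcur}) from the proof of Theorem \ref{maindetailed}, takes $q=C(p'-p'(0))$ as in (\ref{condss2}), and substitutes into (\ref{solsparam}) to obtain (\ref{solsparam1}) via the same cancellation of the $p'(0)$ terms. Your explicit integration of $q'=Cp''$ with the normalization $q(0)=0$ is only a slightly more spelled-out version of a step the paper leaves implicit.
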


\begin{proof} Substituting (\ref{condss2}) into (\ref{solsparam}) we calculate
$$
\begin{array}{l}
\displaystyle\rho(t_1(v,w),w)=vC (p'(v)-p'(0))-\int_0^vC (p'(\tau)-p'(0))d\tau+w(p(v)-vp'(v))=\\
\vspace{-0.3cm}\\
vC (p'(v)-p'(0))-C(p(v)-p'(0)v)+w(p(v)-vp'(v))=(w-C)(p(v)-vp'(v))
\end{array}
$$
as required.\end{proof}

Thus, all hypersurfaces of the form (\ref{basiceq}) that are 2-nondegenerate, uniformly Levi degenerate of rank 1, and for which $\Theta^{2}_{21}|_{\gamma_0}=0$, are described by formula (\ref{solsparam1}). This is an interesting class of not necessarily CR-flat tubes, and it is quite useful to have an explicit characterization for it. Notice, however, that although formula (\ref{solsparam1}) is very simple, it is written in the variables $v$, $w$, whereas the expression for $\rho$ in the original variables $t_1$, $t_2$ (which is what we are really interested in) may turn out to be more complicated. This expression was found in \cite[Lemma 4.1]{I2}, and, as the argument is quite short, we repeat it here for the sake of the completeness of our exposition. 

Let $\zeta$ be the inverse of the function $p'(0)-p'$ near the origin. Define
\begin{equation}
\chi(\tau):=\frac{1}{\tau}\int_{0}^{\tau}\zeta(\sigma)d\sigma.\label{functchi}
\end{equation}
Clearly, $\chi$ is smooth near 0 and satisfies
\begin{equation}
\chi(0)=0,\quad \chi'(0)=-\frac{1}{2p''(0)}.\label{chiconds}
\end{equation} 
Now set
\begin{equation}
\tilde\rho(t_1,t_2):=(t_1+ p'(0) t_2)\chi\left(\frac{t_1+ p'(0) t_2}{t_2-C}\right).\label{deftilderho}
\end{equation}

\begin{proposition}\label{firstcondrho} \it One has $\rho=\tilde\rho$.
\end{proposition}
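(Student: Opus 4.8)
The plan is to verify the identity $\rho=\tilde\rho$ by showing that the explicitly defined function $\tilde\rho$ in (\ref{deftilderho}) satisfies, in the variables $v,w$, exactly the parametric formula (\ref{solsparam1}) established in Proposition \ref{main1}. Since (\ref{solsparam1}) together with the change of variables (\ref{inverttted}) completely determines $\rho$ as a germ at the origin, it suffices to check that $\tilde\rho$ evaluated along the curves (\ref{inverttted}) agrees with the right-hand side of (\ref{solsparam1}). First I would compute the combination $t_1+p'(0)t_2$ under (\ref{inverttted}) with $q=C(p'-p'(0))$: one finds $t_1+p'(0)t_2=q(v)-w\,p'(v)+p'(0)w=C(p'(v)-p'(0))-w(p'(v)-p'(0))=(C-w)(p'(v)-p'(0))$. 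Likewise $t_2-C=w-C$, so the argument of $\chi$ becomes
\begin{equation}
\frac{t_1+p'(0)t_2}{t_2-C}=\frac{(C-w)(p'(v)-p'(0))}{w-C}=-(p'(v)-p'(0))=p'(0)-p'(v).\label{plan1}
\end{equation}

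Next I would use the defining property of $\zeta$ as the inverse of $p'(0)-p'$. Applying $\zeta$ to the quantity in (\ref{plan1}) recovers the variable $v$ itself, so $\chi$ is being evaluated at the point whose image under $\chi$ can be unwound through (\ref{functchi}). The key observation is that the integral $\int_0^{\tau}\zeta(\sigma)\,d\sigma$ with $\tau=p'(0)-p'(v)$ can be evaluated by the substitution $\sigma=p'(0)-p'(s)$, $d\sigma=-p''(s)\,ds$, which turns it into $\int_0^v s\,(-p''(s))\,(-1)\,ds$ after noting $\zeta(p'(0)-p'(s))=s$; an integration by parts then produces an expression in terms of $p(v)$ and $vp'(v)$. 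Carrying this out, I expect $\tau\,\chi(\tau)=\int_0^\tau\zeta(\sigma)\,d\sigma$ to collapse to precisely $p(v)-vp'(v)$ up to the correct sign.

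Assembling these pieces, $\tilde\rho$ along (\ref{inverttted}) equals $(t_1+p'(0)t_2)\,\chi(\dots)=(C-w)(p'(v)-p'(0))\cdot\chi\bigl(p'(0)-p'(v)\bigr)$, and substituting $(t_1+p'(0)t_2)\chi=\int_0^{\tau}\zeta\,d\sigma$ with $\tau$ as in (\ref{plan1}) should yield $(w-C)(p(v)-vp'(v))$, matching (\ref{solsparam1}) exactly. The main obstacle, and the step requiring the most care, will be the integration-by-parts computation of $\int_0^{\tau}\zeta(\sigma)\,d\sigma$ together with meticulous bookkeeping of the two sign reversals—one from the reflection $p'(0)-p'$ in the definition of $\zeta$ and one from the factor $(C-w)$ versus $(w-C)$. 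Once those signs are reconciled, the identity follows formally; there is no genuine analytic difficulty since smoothness and invertibility of $p'(0)-p'$ near $0$ are guaranteed by (\ref{condss1}), which ensures $p''\neq0$ and hence that $\zeta$ and $\chi$ are well-defined smooth germs as recorded in (\ref{chiconds}).
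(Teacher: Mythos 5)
Your argument is correct, but it takes a genuinely different route from the paper's. The paper never substitutes the parametrization into $\tilde\rho$ directly: it computes $\tilde\rho_1$, $\tilde\rho_2$, $\tilde\rho_{11}$ from (\ref{deftilderho}), checks that $\tilde\rho$ satisfies the normalizations and the Monge--Amp\`ere equation (\ref{mongeampere}) with $\tilde\rho_{11}>0$, so that $\tilde\rho$ has its own parametrizing pair $(\tilde p,\tilde q)$, and then observes that the change of variables $\tilde v=\tilde\rho_1$ inverts to $t_1=q(\tilde v)-\tilde w\,p'(\tilde v)$; comparison with (\ref{inverttted}) forces $\tilde q=q$ and $\tilde p=p$, hence $\tilde\rho=\rho$. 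That route identifies the two functions through their first-derivative (Legendre-type) data and needs no integration. You instead verify the identity pointwise along (\ref{inverttted}) by evaluating $\tau\chi(\tau)=\int_0^\tau\zeta(\sigma)\,d\sigma$ via the substitution $\sigma=p'(0)-p'(s)$ and integration by parts, landing on $(w-C)\bigl(p(v)-vp'(v)\bigr)$ and invoking Proposition \ref{main1}; this is more elementary and shorter, at the cost of one explicit integral and of leaning on (\ref{solsparam1}) (which is available, since Proposition \ref{main1} precedes this one), whereas the paper's version also records the useful side facts that $\tilde\rho$ solves (\ref{mongeampere}) with the right normalizations. One small correction: in your substitution the integral becomes $\int_0^v s\,(-p''(s))\,ds$ with \emph{no} extra factor of $-1$ (the limits $\sigma=0\leftrightarrow s=0$ and $\sigma=\tau\leftrightarrow s=v$ keep their orientation), and then $-\int_0^v s\,p''(s)\,ds=p(v)-vp'(v)$ exactly, so the signs close up as you anticipated. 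Both arguments ultimately rest on (\ref{changevar}) being a local diffeomorphism, which justifies concluding $\rho=\tilde\rho$ from equality in the $(v,w)$ chart.
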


\begin{proof}
From (\ref{deftilderho}) we compute:
\begin{equation}
\hspace{0.4cm}\makebox[250pt]{$\begin{array}{l}
\displaystyle\tilde\rho_1=\chi\left(\frac{t_1+ p'(0) t_2}{t_2-C}\right)+\frac{t_1+ p'(0) t_2}{t_2-C}\chi'\left(\frac{t_1+ p'(0) t_2}{t_2-C}\right)=\zeta\left(\frac{t_1+ p'(0) t_2}{t_2-C}\right),\\
\vspace{-0.1cm}\\
\displaystyle\tilde\rho_2=p'(0)\,\chi\left(\frac{t_1+ p'(0) t_2}{t_2-C}\right)+\\
\vspace{-0.4cm}\\
\displaystyle\hspace{3cm}\frac{t_1+ p'(0) t_2}{t_2-C}\left(p'(0)-\frac{t_1+p'(0)t_2}{t_2-C}\right)\chi'\left(\frac{t_1+ p'(0) t_2}{t_2-C}\right),\\
\vspace{-0.1cm}\\
\displaystyle\tilde\rho_{11}=\frac{2}{t_2-C}\chi'\left(\frac{t_1+ p'(0) t_2}{t_2-C}\right)+\frac{t_1+ p'(0) t_2}{(t_2-C)^2}\chi''\left(\frac{t_1+ p'(0) t_2}{t_2-C}\right).
\end{array}$}\label{tilderhoderiv}
\end{equation}
Formulas (\ref{chiconds}), (\ref{deftilderho}), (\ref{tilderhoderiv}) imply
$$
\tilde\rho(0)=0,\quad\tilde\rho_1(0)=0,\quad \tilde\rho_2(0)=0,\quad \tilde\rho_{11}>0.
$$
Also, it is easy to observe that $\tilde\rho$ satisfies the Monge-Amp\`ere equation (\ref{mongeampere}). Hence, $\tilde\rho$ is fully determined by a pair of functions $\tilde p$, $\tilde q$ as in formulas (\ref{inverttted}),  (\ref{solsparam}). These functions satisfy
$$
\tilde p(0)=0,\quad \tilde q(0)=0,\quad \hbox{$\tilde q\,'>0$ everywhere}
$$
(cf.~conditions (\ref{initialconds})).

Let us make a change of coordinates near the origin analogous to (\ref{changevar}):
\begin{equation}
\begin{array}{l}
\tilde v=\tilde\rho_1(t_1,t_2),\\
\vspace{-0.3cm}\\
\tilde w=t_2.
\end{array}\label{changevar1}
\end{equation}
Then by the first identity in (\ref{tilderhoderiv}) we have
$$
\tilde v=(p'(0)-p')^{-1}\left(\frac{t_1+ p'(0) t_2}{t_2-C}\right)
$$
and therefore, taking into account (\ref{condss2}), we see that (\ref{changevar1}) is inverted as
$$
\begin{array}{l}
t_1=C(p'(\tilde v)-p'(0))-\tilde wp'(\tilde v)=q(\tilde v)-\tilde w p'(\tilde v),\\
\vspace{-0.3cm}\\
t_2=\tilde w.
\end{array}
$$
On the other hand, as in (\ref{inverttted}) we have
$$
t_1=\tilde q(\tilde v)-\tilde w \tilde p\,'(\tilde v).
$$
Hence, it follows that $\tilde q=q$ and, since $\tilde p(0)=p(0)=0$, one also has $\tilde p=p$. Therefore, $\tilde\rho=\rho$,  and the proof is complete. \end{proof}

We will now demonstrate how Propositions \ref{main1}, \ref{firstcondrho} work for a particular example.

\begin{example}\label{ex}\rm 

Let $p(v)=e^v-1$. Clearly, conditions (\ref{condss1}) hold  for this choice of $p$. Then by formula (\ref{solsparam1}) we compute
\begin{equation}
\rho(t_1(v,w),w)=(w-C)((1-v)e^v-1),\label{formrho}
\end{equation}
where $C>0$. To rewrite $\rho$ in the variables $t_1$, $t_2$, we can either directly invert formula (\ref{inverttted}) or use Proposition \ref{firstcondrho}. To invert formula (\ref{inverttted}), we notice that for our choice of $p$ and $q$ it becomes
$$
\begin{array}{l}
t_1=(C-w)e^v-C,\\
\vspace{-0.3cm}\\
t_2=w.
\end{array}
$$
We then obtain
\begin{equation}
\begin{array}{l}
\displaystyle v=\log\left(\frac{t_1+C}{C-t_2}\right),\\
\vspace{-0.3cm}\\
w=t_2,
\end{array}\label{inverse}
\end{equation}
and plugging (\ref{inverse}) into (\ref{formrho}) yields
\begin{equation}
\rho(t_1,t_2)=(t_1+C)\log\left(\frac{t_1+C}{C-t_2}\right)-(t_1+t_2).\label{formmmrho}
\end{equation}

Rather than inverting formula (\ref{inverttted}), let us now utilize Proposition \ref{firstcondrho} in order to determine $\rho(t_1,t_2)$. We have $\zeta(\sigma)=\log(1-\sigma)$, and therefore by (\ref{functchi}) we see
$$
\chi(\tau)=\frac{\tau-1}{\tau}\log(1-\tau)-1.
$$
Then after a short calculation formula (\ref{deftilderho}) leads to expression (\ref{formmmrho}) as well.

Note that, since the function $p(v)$ in this example does not satisfy the Monge equation, the corresponding tube hypersurface $\Gamma_{\rho}$ defined by (\ref{basiceq}) is not CR-flat, or, equivalently, the quantity $\Theta^{2}_{10}|_{\gamma_0}={\mathbf \Theta}^{2}_{10}$ does not identically vanish.
\end{example}

\begin{remark}\label{pocfunctions}
For any real hypersurface $M$ in $\CC^3$ in the class ${\mathfrak C}_{2,1}$, paper \cite{Poc} introduces a pair of expressions, called $J$ and $W$, in terms of a local defining function that vanish simultaneously on $M$ if and only if $M$ is locally CR-equivalent to $M_0$. The expressions are rather complicated, but in the tube case it is not very hard to see that the condition $W=0$ is identical to equation (\ref{veryfinalthetav}) (i.e., to the vanishing of $\Theta^{2}_{21}|_{\gamma_0}$) and the condition $J=0$ calculated under the assumption $W=0$ to equation (\ref{veryfinalthetasss}) (i.e., to the vanishing of $\Theta^{2}_{10}|_{\gamma_0}$ calculated in part under the assumption $\Theta^{2}_{21}|_{\gamma_0}=0$ as in \cite{I2}). It would be interesting to see whether an analogue of Theorem \ref{main} holds for $J$ and $W$ in place of $\Theta^{2}_{10}|_{\gamma_0}$, $\Theta^{2}_{21}|_{\gamma_0}$, respectively, if the hypersurface is no longer assumed to be tube, i.e., whether it is possible to find a reasonable single condition characterizing CR-flatness for the entire class ${\mathfrak C}_{2,1}$.
\end{remark}

\end{document}